\newtheorem{theorem}{Theorem}[section]
\newtheorem{lemma}[theorem]{Lemma}
\newtheorem{proposition}[theorem]{Proposition}
\newtheorem{conjecture}[theorem]{Conjecture}
\theoremstyle{definition}
\newtheorem{definition}[theorem]{Definition}
\newtheorem{remark}[theorem]{Remark}
\numberwithin{equation}{section}
\numberwithin{figure}{section} 
\numberwithin{table}{section}
\DeclareMathOperator{\PD}{{\rm PD}}
\DeclareMathOperator{\disp}{{\rm disp}}
\DeclareMathOperator{\sys}{{\rm sys}}
\DeclareMathOperator{\area}{{\rm area}}
\newcommand{\Mob}{{\rm Mob}}
\newcommand\dist{{\rm dist}}
\newcommand \arc{\, {\rm arc}}
\newcommand \cqfd{\unskip\kern 6pt\penalty 500
\raise -2pt\hbox{\vrule\vbox to10pt{\hrule width 4pt
\vfill\hrule}\vrule}\par}                 
\def\adots{\mathinner{\mkern2mu\raise1pt\hbox{.}
\mkern3mu\raise4pt\hbox{.}\mkern1mu\raise7pt\hbox{.}}}
\def\hfl#1{\frac{\buildrel{#1}}{{\hbox to 12mm{\rightarrowfill}}}}
\def \\R^n \times \R^n
\rightarrow \R{\mathop{\R^n \times \R^n
\rightarrow \R}}
\newcommand\C{{\mathbb {C}}}
 \newcommand\R {{\mathbb R}}
\newcommand\RR {{\mathbb R}} \newcommand\RP {{\mathbb R}{\mathbb P}}
\newcommand\T {{\mathbb T}}
\newcommand{\smat}[4]
{{\(\!\!\begin{array}{cc}{#1}\!&\!{#2}\\begin{equation*}-0.1cm]{#3}\!&\!{#4}\end{array}\!\!\)}}
\newcommand{\rp}{\mathbb R\mathbb P}
\long\def\forget#1\forgotten{} %
\long\def\forgett#1\forgottent{} %
\def\circ{\mathchoice%
 {\mathrel{\raise 1pt\hbox{$\scriptstyle\mathchar"020E$}}}
 {\mathrel{\raise 1pt\hbox{$\scriptstyle\mathchar"020E$}}}
 {\mathrel{\raise 1pt\hbox{$\scriptscriptstyle\mathchar"020E$}}}
 {}
}
\newcommand{\nc}{\newcommand} \nc{\on}{\operatorname}
\nc{\df}{\on{\it df}}
\nc{\conf}{\on{conf}}
\nc{\spt}{\on{spt}}
\nc{\norm}[1]{\| #1 \|}
\nc{\parallelleer}{\norm{\ }} 
\nc{\parallelh}{\norm h} 
\nc{\parallelk}{\norm k} 
\nc{\parallelx}{\norm x} 
\nc{\parallelhrr}{\norm {h_\RR}} 
\nc{\parallelom}{\norm \omega} 
\nc{\parallelomij}{\norm {\omega_{i_j}}} 
\nc{\parallelomx}{\norm {\omega_{x}}} 
\nc{\parallelpi}{\norm \pi} 
\nc{\parallelalf}{\norm \alpha} 
\nc{\parallelalfs}{\norm {\alpha_s}} 
\nc{\parallelalfi}{\norm {\alpha_i}} 
\nc{\parallelalfij}{\norm {\alpha_{i_j}}} 
\nc{\parallelbeta}{\norm \beta} 
\nc{\parallelbetat}{\norm {\beta_t}} 
\nc{\parallelhcapalf}{\norm {h \cap \alpha}} 
\nc{\parallelPDralf}{\norm {\PD_\RR(\alpha)}} 
\nc{\strichleer}{| \  |}
\nc{\NN}{\mathbb N}
\nc{\rr}{\mbox{$\scriptstyle\mathbb R$}}
\nc{\dF}{{\it dF}} 
\nc{\DF}{{\it DF}} 
\nc{\ds}{{\it ds}} 
\nc{\dvol}{{\it dvol}}
\nc{\grad}{{\rm grad}} 
\nc{\strichw}{\|\omega\|} 
\nc{\strichwx}{|\omega_x|}
\nc{\Hess}{{\rm Hess}}
\begin{document}

\title[Hyperellipticity and Klein bottle
companionship]{Hyperellipticity and Klein bottle companionship in
systolic geometry}

\author{Karin Usadi Katz}

\author[M.~Katz]{Mikhail G. Katz$^{*}$}

\address{Department of Mathematics, Bar Ilan University, Ramat Gan
52900 Israel} \email{katzmik ``at'' macs.biu.ac.il}

\thanks{$^{*}$Supported by the Israel Science Foundation (grants
no.~84/03 and 1294/06) and the BSF (grant 2006393)}

\subjclass
{Primary 53C23; 
Secondary 30F10, 
58J60
}

\keywords{Antiholomorphic involution, coarea formula, hyperelliptic
curve, Klein bottle, Klein surface, Loewner's torus inequality, Mobius
strip, Parlier-Silhol curve, Riemann surface, systole}

\date{\today}

\begin{abstract}
Given a hyperelliptic Klein surface, we construct companion Klein
bottles.  Bavard's short loops on companion bottles are studied in
relation to the surface to improve an inequality of Gromov's in
systolic geometry.
\end{abstract}

\maketitle 

\tableofcontents

\section{Introduction}

Systolic inequalities for surfaces compare length and area, and can
therefore be thought of as ``opposite'' isoperimetric inequalities.
The study of such inequalities was initiated by C. Loewner in '49 when
he proved his torus inequality for~$\T^2$.  In higher dimensions, we
have M.~Gromov's deep result \cite{Gr1} on the existence of a
universal upper bound for the systole in terms of the volume of an
essential manifold.  A promising research direction, initiated by
L.~Guth \cite{Gu, Gu09} is a search for a proof of Gromov's bound
without resorting to filling invariants as in \cite{Gr1}.

In dimension~$2$, the focus has been, on the one hand, on obtaining
near-optimal asymptotic results in terms of the genus \cite{KS2, KSV},
and on the other, on obtaining realistic estimates in cases of low
genus \cite{KS1, HKK}.  One goal has been to determine whether all
aspherical surfaces satisfy Loewner's bound, a question that is still
open in general.  It was resolved in the affirmative for genus~$2$ in
\cite{KS1}.  An older optimal inequality of C.~Bavard \cite{Bav1} for
the Klein bottle~$K$ is stronger than Loewner's bound.  Other than
these results, no bound is available that's stronger than Gromov's
general estimate \cite{Gr1} for aspherical surfaces:
\begin{equation}
\label{11b}
\sys^2 \leq \frac{4}{3} \area,
\end{equation}
even for the genus~$3$ surface.  There are at most~$17$ genera where
Loewner's bound could be violated \cite{KS2}, including genus~$3$.

As Gromov points out in \cite{Gr1}, the~$\frac{4}{3}$ bound
\eqref{11b} is actually {\em optimal\/} in the class of Finsler
metrics.  Therefore any further improvement is not likely to result
from a simple application of the coarea formula.  One can legitimately
ask whether any improvement is in fact possible, of course in the
framework of Riemannian metrics.

Our purpose in the present article is to furnish such an improvement
in the case of the surface
\[
K\# \rp^2 = \T^2 \# \rp^2 = 3\rp^2
\]
of Euler characteristic~$-1$, as follows.

\begin{theorem}
The surface~$3\rp^2$ satisfies the bound
\[
\sys^2 \leq 1.333 \; \area
\]
(see Theorem~\ref{15b} below).
\end{theorem}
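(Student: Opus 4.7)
The strategy is to exploit the hyperelliptic structure on the orientation double cover of $3\rp^2$ to construct auxiliary Klein bottles---the \emph{companion bottles}---on which Bavard's optimal inequality $\sys^2 \leq \tfrac{\pi}{2\sqrt{2}}\area$ is available. Bavard's constant $\pi/(2\sqrt{2})\approx 1.1107$ is strictly smaller than Gromov's $4/3\approx 1.3333$, so even a somewhat lossy transfer of systolic information from a companion back to $3\rp^2$ should produce the tiny improvement required to reach $1.333$.

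First I would fix a Riemannian metric $g$ on $3\rp^2$ realizing (or nearly realizing) the optimal systolic ratio, and pass to the orientation double cover $\pi\colon\Sigma_2\to 3\rp^2$, a genus $2$ surface. Since every genus $2$ surface is hyperelliptic, the conformal class of $\pi^{*}g$ carries a canonical hyperelliptic involution $\hyperelliptic$, which together with the orientation-reversing deck involution $\tau$ generates a group $G\cong\Z/2\oplus\Z/2$ of conformal automorphisms of $\Sigma_2$. By replacing $g$ with a $G$-averaged (or suitably uniformized) representative of its conformal class, I would arrange $G$ to act isometrically. The companion bottles are then identified as quotients of $\Sigma_2$ by the remaining nontrivial involutions of $G$: an Euler-characteristic check, together with an analysis of the fixed-point data of $\hyperelliptic$ and of $\hyperelliptic\circ\tau$, shows that at least one of these quotients is a Klein bottle.

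Bavard's inequality applied to such a companion $K$ produces a short closed geodesic $\gamma_K\subset K$ of length at most $\bigl(\tfrac{\pi}{2\sqrt 2}\area(K)\bigr)^{1/2}$. The main geometric step is then to lift $\gamma_K$ back to $\Sigma_2$ and project it to $3\rp^2$, and verify that the resulting curve is homotopically nontrivial on $3\rp^2$, so that its length controls $\sys(3\rp^2)$ from above. At the same time, the coarea formula applied to the quotient $\Sigma_2\to K$ and to $\pi$ itself lets one control $\area(K)$ in terms of $\area(3\rp^2)$, while keeping careful track of factors coming from the multiplicity of the covering.

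The principal obstacle is this geometric matching: one must rule out the possibility that Bavard's short loop on a companion descends to a null-homotopic loop on $3\rp^2$, and must ensure that the arithmetic of the area bookkeeping under the quotient and coarea operations leaves a strictly positive margin between $\pi/(2\sqrt 2)$ and $4/3$. Since the target gap over $4/3$ is only about $3\cdot 10^{-4}$, any clean implementation of the comparison should suffice, provided the matching is established with nontrivial loops on $3\rp^2$; one must also handle the technical point that the hyperelliptic involution is a priori conformal rather than isometric, presumably either by the averaging step above or by a compactness argument in the moduli space of Klein surfaces of Euler characteristic~$-1$.
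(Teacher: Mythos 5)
Your high-level outline (exploit hyperellipticity, construct companion Klein bottles, use Bavard's inequality, transfer a short loop back to $3\rp^2$) matches the spirit of the paper, but the central construction you propose does not work, and the hardest part of the argument is missing.

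The companion Klein bottles in the paper are \emph{not} quotients of $\Sigma_2$ by involutions in the group $G=\langle J,\tau\rangle\cong\Z/2\oplus\Z/2$. None of those quotients is a Klein bottle: $\Sigma_2/\tau=3\rp^2$ is the surface we started from; $\Sigma_2/J$ is the hyperelliptic quotient, a sphere (which is not aspherical, so Bavard is unavailable); and $J\circ\tau$ fixes the equatorial circle pointwise, so $\Sigma_2/(J\tau)$ is an orbifold with one-dimensional singular locus, not a Klein bottle. The paper's companions are instead built as follows: write $\Sigma_2$ in the real hyperelliptic form $-y^2=(x-a)(x-\bar a)(x-b)(x-\bar b)(x-c)(x-\bar c)$, \emph{drop one conjugate pair} of branch points to form a genus-one hyperelliptic curve $\T_{a,b}$, endow it with the metric pulled back from the hyperelliptic quotient $S^2$, and then take the antipodal quotient $K_{a,b}=\T_{a,b}/\tau$. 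This is a genuinely different surface sharing branch data with $\Sigma_2$, not a quotient of $\Sigma_2$; your Euler-characteristic count on quotients of $\Sigma_2$ would never find it.

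The second, and arguably more serious, omission is the transfer step and the case analysis it forces. You acknowledge that one must verify the Bavard loop descends to a noncontractible loop on $3\rp^2$, but you treat this as a technicality. In fact, this is where all the work is. The paper shows the transfer succeeds whenever the Bavard loop is one-sided, or when its projection to $S^2$ meets the equator, or when the three companion Bavard loops induce distinct partitions of the six branch points (cut-and-paste). In the remaining case, \emph{no} companion loop transfers, and the paper instead derives a decomposition $3\rp^2=\Sigma_{1,1}\cup_{S^1}\Mob$ along a loop double-covering the projected systole and runs a completely different estimate: a coarea bound combined with the Blatter--Sakai spherical-belt comparison on the M\"obius side, and a Loewner estimate on a torus formed by capping $\Sigma_{1,1}$ with a hemisphere. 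The margin over $4/3$ that this produces is about $3\times 10^{-4}$ and the paper emphasizes that the estimates are extremely tight; your expectation that ``any clean implementation of the comparison should suffice'' is unwarranted --- a lossy transfer argument of the type you sketch would almost certainly lose more than $10^{-4}$ and fail to improve on Gromov's bound at all.
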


Note the absence of an ellisis following ``333'', making our estimate
an improvement on Gromov's~$\frac{4}{3}$ bound \eqref{11b}.  Our proof
exploits a variety of techniques ranging from hyperellipticity to the
coarea formula and cutting and pasting.  An inspection of the proof
reveals that all the estimates are very tight and only produce an
improvement in the fourth decimal place, a fact the present writer has
no explanation for other than a remarkable coincidence.

Note that the current best upper bound for the systole only differs by
about~$30\%$ from the Silhol-Parlier hyperblolic example (see
Section~\ref{fourteen}).

\section{Hyperellipticity}

Complex-analytic information can be applied to the study of Klein
surfaces and their Riemannian geometry.  The argument involves
dragging a short loop across, from the extreme right to the extreme
left of the diagram
\begin{equation*}
3\RP^2 \leftarrow \Sigma_2 \to S^2 \leftarrow \T_{a,b} \to K .
\end{equation*}
(see \eqref{53} for details).  The quadratic equation
\begin{equation*}
y^2=p
\end{equation*}
over~$\C$ is well known to possess two distinct solutions for every
$p\not=0$, and a unique solution for~$p=0$.  Similarly, the locus
(solution set) of the equation
\begin{equation}
\label{11}
y^2= p(x)
\end{equation}
for~$(x,y)\in \C^2$, where~$p(x)$ is a (generic) polynomial of even
degree
\begin{equation*}
2g+2,
\end{equation*}
defines a Riemann surface which is a branched two-sheeted cover
of~$\C$.  Such a cover is constructed by projection to
the~$x$-coordinate.  The branching locus corresponds to the roots
of~$p(x)$.  It is known that there exists a unique smooth closed
Riemann surface~$\Sigma_g$ naturally associated with \eqref{11},
sometimes called the {\em smooth completion\/} of the affine
surface~\eqref{11}, together with holomorphic map
\begin{equation}
\label{12}
P_x: \Sigma_g\to \hat \C=S^2
\end{equation}
extending the projection to the~$x$-coordinate.  By the
Riemann-Hurwitz formula, the genus of the smooth completion is~$g$.
All such surfaces are hyperelliptic by construction, where the
hyperelliptic involution
\begin{equation*}
J: \Sigma_g \to \Sigma_g
\end{equation*}
flips the two sheets of the double cover of~$S^2$.

\section{A pair of involutions}

A hyperelliptic closed Riemann surface~$\Sigma_g$ admitting an
orientation-reversing (antiholomorphic) involution~$\tau$ can always
be reduced to the form \eqref{11} where~$p(x)$ is a polynomial all of
whose coefficients are real, where the involution
\begin{equation*}
\tau: \Sigma_g \to \Sigma_g
\end{equation*}
restricts to complex conjugation on the affine part of the surface
in~$\C^2$, namely~$\tau(x,y)=(\bar x, \bar y)$.  

The special case of a fixed point-free involution~$\tau$ can be
represented as the locus of the equation
\begin{equation}
\label{21}
-y^2 = \prod_i (x-x_i)(x-\bar x_i),
\end{equation}
where~$x_i\in \C\setminus \R$ for all~$i$.  Here the minus sign on the
left hand side ensures the absence of real solutions, and therefore
the fixed point-freedom of~$\tau$.

By the uniqueness of the hyperelliptic involution, we have the
commutation relation
\begin{equation*}
\tau \circ J = J \circ \tau.
\end{equation*}

\section{Klein surfaces and bottles}

A Klein surface is a non-orientable closed surface.  Such a surface
can be thought of as an antipodal quotient~$\Sigma_g/\tau$ of an
orientable surface by a fixed-point free, orientation-reversing
involution~$\tau$.  It is known that~$(\Sigma_g,\tau)$ can be thought
of as a real surface.  A Klein surface is homeomorphic to the
connected sum
\begin{equation*}
\RP^2\#\RP^2\#\cdots\#\RP^2=n\RP^2
\end{equation*}
of~$n$ copies of the real projective plane.  The case~$n=2$
corresponds to the Klein bottle~$K=2\RP^2$.  The orientable double
cover of the Klein bottle is a torus~$\T^2$.  The Klein bottle~$K$ can
be thought of as a pair~$(\T^2, \tau)$, or more precisely the quotient
\begin{equation*}
\T^2/\{1,\tau\}
\end{equation*}
where~$\tau$ is a fixed point free, orientation-reversing involution.
We will sometimes use the abbreviated notation
\begin{equation*}
K=2\RP^2 = \T^2/\tau,
\end{equation*}
and refer to it as the antipodal quotient.  In the case~$n=3$, we
obtain the surface~$3\RP^2$ of Euler characteristic~$-1$, whose
orientable double cover is the genus~$2$ surface~$\Sigma_2$.  Thus,
the surface~$3\RP^2$ can be thought of as the pair~$(\Sigma_2, \tau)$,
or more precisely the quotient
\begin{equation*}
3\RP^2= \Sigma_2/\{1,\tau\},
\end{equation*}
where~$\tau$ is a fixed point free, orientation-reversing involution.
We will sometimes use the abbreviated notation
\begin{equation*}
3\RP^2 = \Sigma_2/\tau,
\end{equation*}
and refer to it as the antipodal quotient.

\section{?-sided loops}

A loop on a surface is called~$2$-sided if its tubular neighborhood is
homeomorphic to an annulus, and~$1$-sided if it is homeomorphic to a
Mobius strip.  A one-sided loop
\begin{equation*}
\gamma\subset 3\RP^2
\end{equation*}
lifts to a path on~$(\Sigma_2, \tau)$ connecting a pair of points
which form an orbit of the involution~$\tau$.  In other words, the
inverse image of~$\gamma$ under the double cover~$\Sigma_2 \to 3\RP^2$
is a circle (i.e. has a single connected component homeomorphic to a
circle).  Meanwhile, a two-sided loop
\begin{equation*}
\delta \subset 3\RP^2
\end{equation*}
lifts to a closed curve on~$(\Sigma_2, \tau)$.  In other words, the
inverse image of~$\delta$ under the double cover~$\Sigma_2 \to 3\RP^2$
has a pair of connected components (circles).

\section{Companionship}

Given a real Riemann surface~$(\Sigma_g, \tau)$, consider the
presentation \eqref{21} with~$p$ real.  We can write the roots of~$p$
as a collection of pairs~$(a, \bar a)$.  In the genus~$2$ case, we
have three pairs~$(a, \bar a, b, \bar b, c, \bar c)$.  Thus the affine
form of the surface is the locus of the equation
\begin{equation}
\label{51}
-y^2 = (x-a)(x-\bar a) (x-b)(x-\bar b) (x-c)(x-\bar c)
\end{equation}
in~$\C^2$.  Choosing two conjugate pairs, for instance~$(a, \bar a, b,
\bar b)$, we can construct a companion surface
\begin{equation}
\label{52}
-y^2 = (x-a)(x-\bar a) (x-b)(x-\bar b),
\end{equation}
By the Riemann-Hurwitz formula, we have~$g=1$ and therefore the
(smooth completion of the) companion surface is a torus.  We will
denote it
\begin{equation*}
\T_{a,b}.
\end{equation*}
By construction, its set of zeros is~$\tau$-invariant.  In other
words, the (affine part in~$\C^2$ of the) torus is invariant under the
action of complex conjugation.  Thus, the surface~$\T_{a,b}/\tau$ is a
Klein bottle~$K$, which we will refer to as a companion Klein bottle
of the original Klein surface~$3\RP^2=\Sigma_2/\tau$, namely the
antipodal quotient of~$\eqref{51}$.

The maps constructed so far can be represented by the following
diagram of homomorphisms (note that two out of four point lefttward):
\begin{equation}
\label{53}
3\RP^2 \leftarrow \Sigma_2 \to S^2 \leftarrow \T_{a,b} \to K .
\end{equation}

\section{Equator}
\label{six}

Complex conjugation on~$\hat \C = S^2$ fixes a circle called the
equator, which could be denoted
\begin{equation*}
\hat\R \subset \hat \C.
\end{equation*}
The inverse image of the equator~$\hat \R$ under the double cover
$\Sigma_2 \to S^2$ is also a circle.  We will refer to it as the
equatorial circle, or equator, of~$\Sigma_2$.  

\begin{lemma}
The equator of~$\Sigma_2$ is the fixed point set of the composed
involution~$\tau \circ J=J\circ \tau$.  The equator is invariant under
the action of~$\tau$.
\end{lemma}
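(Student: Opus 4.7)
\medskip

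The plan is to work in the affine model of Section 3 and compute directly, then extend to the point(s) at infinity by continuity. In the affine chart, the hyperelliptic involution acts by $J(x,y)=(x,-y)$ (it flips the two sheets of $P_x$), while $\tau$ restricts to componentwise complex conjugation $\tau(x,y)=(\bar x, \bar y)$. Composing, we get
\[
(\tau\circ J)(x,y) = (\bar x, -\bar y).
\]
A point is fixed if and only if $\bar x=x$ (so $x\in\R$) and $-\bar y=y$ (so $y\in i\R$).

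Next I would compare this fixed locus with the affine part of the equator. The equator of $\Sigma_2$ is $P_x^{-1}(\hat\R)$, where $P_x\colon \Sigma_2\to S^2$ is the projection from \eqref{12}. For real $x$, equation \eqref{21} specializes to
\[
-y^2 = \prod_i (x-x_i)(x-\bar x_i) = \prod_i |x-x_i|^2 \ \geq 0,
\]
since all roots $x_i$ come in conjugate pairs. Hence $y^2\leq 0$, forcing $y\in i\R$. Conversely, any $(x,y)$ fixed by $\tau\circ J$ automatically has $x\in\R$ and so lies in $P_x^{-1}(\hat\R)$. Thus on the affine part, the fixed set of $\tau\circ J$ coincides with the equator.

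To deal with the point(s) at infinity that complete $\Sigma_2$, I would argue by continuity: both the equator (a preimage of the closed circle $\hat\R$) and the fixed set of the antiholomorphic involution $\tau\circ J$ are closed subsets of~$\Sigma_2$, and they already agree on the dense open affine part, so they agree everywhere. This gives the first claim. Note that the commutation relation $\tau\circ J=J\circ\tau$ recorded at the end of Section~3 is what makes $\tau\circ J$ an honest involution in the first place.

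For the second claim, observe that the projection $P_x$ intertwines $\tau$ with complex conjugation on $S^2$, i.e.\ $P_x\circ\tau = \overline{(\cdot)}\circ P_x$, because in the affine model $P_x(x,y)=x$ and $\tau(x,y)=(\bar x,\bar y)$. Since complex conjugation fixes $\hat\R$ setwise,
\[
\tau\bigl(P_x^{-1}(\hat\R)\bigr) = P_x^{-1}\bigl(\overline{\hat\R}\bigr) = P_x^{-1}(\hat\R),
\]
so the equator is $\tau$-invariant. The only mild subtlety in the whole argument is the behavior at the two points over $\infty\in\hat\C$ (which exist since $\deg p=6$ is even): $J$ swaps them and $\tau$ swaps them as well, so both lie in the fixed set of $\tau\circ J$ and in the $\tau$-invariant equator, consistent with the continuity extension above.
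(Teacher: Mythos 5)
Your proof is correct; the paper states this lemma without proof, and your direct coordinate computation ($J(x,y)=(x,-y)$, $\tau(x,y)=(\bar x,\bar y)$, so $\tau\circ J$ fixes exactly the points with $x\in\R$, where the defining equation $-y^2=\prod_i|x-x_i|^2>0$ forces $y\in i\R$) is precisely the intended verification, as is deducing $\tau$-invariance from $P_x\circ\tau=\overline{(\cdot)}\circ P_x$. One small caution: the blanket principle ``two closed sets agreeing on a dense open set agree everywhere'' is false in general, but your explicit check that the two points over $\infty$ are swapped by each of $J$ and $\tau$ (hence fixed by $\tau\circ J$, and trivially on the equator) is what actually closes the argument, so nothing is missing.
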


The action of~$\tau$ on the equator of~$\Sigma_2$ is fixed point-free
and thus can be thought of as a rotation by~$\pi$.

In the case of the double cover~$\T_{a,b} \to S^2$, the inverse image
of the equator is a pair of disjoint circles.  The involution~$\tau$
acts on the torus by switching the two circles.  Note that the
involution~$\tau \circ J$ fixes both circles pointwise.

\section{Area and systole of a Klein surface}

So far we have mostly dealt with conformal information independent of
the metric.  In this section we begin to consider metric-dependent
invariants.

\begin{lemma}
Given a~$J$-invariant metric on the Klein surface~$3\RP^2$, we have
the following relations among the areas of the surfaces appearing in
\eqref{53}:
\begin{equation*}
\area(3\RP^2) = \area (S^2) = \area (K),
\end{equation*}
as well as
\begin{equation*}
\area(\Sigma_2) = \area (\T_{a,b}) = 2 \area (3\RP^2).
\end{equation*}
\end{lemma}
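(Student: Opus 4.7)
The plan is to exploit the fact that every arrow in the diagram \eqref{53} is a (possibly branched) two-fold cover, and that a $J$-invariant metric on $3\RP^2$ pulls back and pushes forward compatibly across the whole diagram.

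First I would unpack the hypothesis. A $J$-invariant metric on $3\RP^2=\Sigma_2/\tau$ is, by definition, the push-down of a Riemannian metric on $\Sigma_2$ that is invariant under both $\tau$ and $J$. Since $\Sigma_2\to 3\RP^2$ is a free, orientation-reversing double quotient by $\tau$, it is locally isometric, so $\area(\Sigma_2)=2\,\area(3\RP^2)$. Since the metric is $J$-invariant, it also descends to a Riemannian metric on $\Sigma_2/J=S^2$ making the hyperelliptic projection $P_x$ of \eqref{12} a branched isometric double cover. The branch locus is finite and hence of measure zero, so $\area(\Sigma_2)=2\,\area(S^2)$. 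Comparing these two relations yields $\area(S^2)=\area(3\RP^2)$.

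Next I would transport the metric to the companion tower. By the commutation $\tau\circ J=J\circ\tau$, the involution $\tau$ descends from $\Sigma_2$ to the quotient $S^2=\Sigma_2/J$, where it acts as complex conjugation on $\hat\C$; and the descended metric on $S^2$ is $\tau$-invariant because the original one is. Pulling this $S^2$-metric back via the hyperelliptic cover $\T_{a,b}\to S^2$ coming from \eqref{52} equips $\T_{a,b}$ with a Riemannian metric for which this map is again a branched isometric two-fold cover, giving $\area(\T_{a,b})=2\,\area(S^2)=2\,\area(3\RP^2)$. By construction this metric on $\T_{a,b}$ is $J$-invariant; and since the $\tau$-action on $\T_{a,b}$ covers complex conjugation on $S^2$, it is also $\tau$-invariant. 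Hence it descends to $K=\T_{a,b}/\tau$, and that quotient map is a free double cover, so $\area(\T_{a,b})=2\,\area(K)$. Combined with the previous equalities this gives $\area(K)=\area(S^2)=\area(3\RP^2)$, completing all the stated identities.

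The whole argument is essentially bookkeeping: the one conceptual point that has to be verified is that the metric originating on $\Sigma_2$ remains compatible with each of the four remaining surfaces in \eqref{53}, and the main risk of getting tripped up is forgetting to check $\tau$-equivariance when one pulls back from $S^2$ to $\T_{a,b}$. That step, however, follows immediately from the $J$-$\tau$ commutation already recorded in Section~3, so no genuine obstacle is expected.
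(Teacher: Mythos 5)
Your proof is correct and fills a gap: the paper states this lemma without any proof, so there is nothing in the text to compare against. Your argument is exactly the natural one --- free quotients halve area, branched double covers also halve area since the branch locus has measure zero, and $\tau$- and $J$-invariance propagate across the diagram via the commutation $\tau\circ J = J\circ\tau$.

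One small point worth making explicit: the push-down metric on $S^2=\Sigma_2/J$ acquires cone singularities of angle $\pi$ at the six branch points, and its pullback to $\T_{a,b}$ is therefore not smooth at the two preimages of each of $c,\bar c$ (cone angle $\pi$), even though it becomes smooth at $a,\bar a,b,\bar b$ where the branching of $\T_{a,b}\to S^2$ doubles the angle back to $2\pi$. This does not affect any area computation, since the singular set always has measure zero, but it is the honest reason one must speak of ``the area of $S^2$'' and ``the area of $\T_{a,b}$'' only for a singular Riemannian metric; when the paper later invokes Loewner's inequality for $\T_{a,b}$ one should keep in mind that that inequality is applied to a metric with conical singularities, which is admissible in the usual formulations but deserves a mention.
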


The systole, denoted ``sys'', of a space is the least
length of a loop which cannot be contracted to a point in the space.
Given a metric on a Klein surface
\begin{equation*}
n\RP^2=\Sigma_g/\tau,
\end{equation*}
we consider the natural pullback metric on its orientable double
cover, denoted~$\Sigma_g$.  This metric is invariant under the
involution~$\tau$.  We define the least displacement invariant
``$\disp$'' by setting
\begin{equation}
\label{61b}
\disp(\tau)= \min \left\{ \dist(x,\tau(x)) \mid x\in \Sigma_g
\right\}.
\end{equation}
The systole of~$\sys(n\RP^2)$ can be expressed as the least of the
following two quantities:
\begin{equation*}
\sys(n\RP^2) = \min \left\{ \sys(\Sigma_g), \disp(\tau) \right\},
\end{equation*}
where~$\disp$ is the least displacement defined in \eqref{61b}.

\section{Systolic estimates}

Given a Riemannian metric on a Klein surface~$n\RP^2$, we are
interested in obtaining upper bounds for its systolic ratio
\begin{equation}
\label{61}
\frac{\sys^2}{\area},
\end{equation}
where ``sys'' is its systole.  Recall that the following four
properties of a closed surface~$\Sigma$ are equivalent:

\begin{enumerate}
\item
$\Sigma$ is aspherical;
\item
the fundamental group of~$\Sigma$ is infinite;
\item
the Euler characteristic of~$\Sigma$ is non-positive;
\item
$\Sigma$ is not homeomorphic to either~$S^2$ or~$\RP^2$.
\end{enumerate}

The following conjecture has been discussed in the systolic
literature, see \cite{SGT}.

\begin{conjecture}
Every aspherical surface satisfies Loewner's bound
\begin{equation}
\label{loew}
\frac{\sys^2}{\area} \leq \frac{2}{\sqrt{3}}.
\end{equation}
\end{conjecture}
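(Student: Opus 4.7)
The plan is to extend the strategy of this paper to an inductive program on topological type. Because the conjecture is open and Gromov's $\frac{4}{3}$ bound is already Finsler-optimal, any proof must exploit a specifically Riemannian reduction; the companion-surface technology of the present article is a natural starting point.

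First I would fix an aspherical surface $\Sigma$ of topological type $n\RP^2$ or $\Sigma_g$ and seek an analog of the chain
\[
3\RP^2 \leftarrow \Sigma_2 \to S^2 \leftarrow \T_{a,b} \to K
\]
of \eqref{53}: a real hyperelliptic presentation that exhibits a companion of strictly smaller genus on which a Bavard-type inequality or the inductive hypothesis already holds. For an orientable $\Sigma_g$ this means placing the metric in a hyperelliptic conformal class and grouping the $2g+2$ branch points into conjugate pairs as in \eqref{21}. For a non-orientable $n\RP^2 = \Sigma_{n-1}/\tau$ with $n \geq 4$, the analog is to partition the conjugate pairs so as to produce a companion $(\Sigma_{g'}, \tau)$ whose antipodal quotient is a lower $n'\RP^2$ for which the bound is inductively available.

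The second step is to transport a short loop across the companion diagram, exactly as Section~\ref{six} does via the equator. The coarea formula would compare $\disp(\tau)$ upstairs to $\disp(\tau)$ on the companion, while the areas are related by the lemma of Section~8. Combining Bavard's optimal Klein-bottle bound (or the inductively known bound on the companion) with these comparisons yields a candidate bound on the systolic ratio of $\Sigma$. The arithmetic has to be tight; the present paper gains only a few parts in $10^{4}$, so pushing from $\frac{4}{3}$ all the way down to $\frac{2}{\sqrt{3}}$ would require stacking many such gains along an inductive hierarchy, which is itself a nontrivial coordination problem.

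The main obstacle, and the reason the conjecture remains open, will be the first step in its orientable form: for $g \geq 3$ the hyperelliptic locus in moduli space has codimension $g-2$, and there is no a priori reason for the worst Riemannian metric realizing the supremal systolic ratio to be conformal to a hyperelliptic curve. A compactness and Teichm\"uller-deformation argument might reduce the problem to this locus, but controlling the systolic ratio under such a deformation without destroying the fine $\frac{\sqrt{3}}{2}$ improvement we are trying to extract appears to require a genuinely new geometric input beyond the coarea and hyperellipticity toolkit exploited in this paper. Absent such an input, the realistic near-term target is not the full conjecture but rather the elimination of the $17$ exceptional genera of \cite{KS2} by case-by-case versions of the present $3\RP^2$ argument.
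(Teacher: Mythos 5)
The statement you were asked about is labeled a \emph{Conjecture} in the paper, and the paper explicitly records that it is ``still open in general''; there is no proof of it anywhere in the text to compare yours against. What the paper actually proves is Theorem~\ref{15b}, the far weaker bound $\sys^2/\area\leq 1.333$ for the single surface $3\RP^2$, shaving a few parts in $10^4$ off Gromov's $4/3$. Your submission is, by your own account, a research program rather than a proof, and it does not establish \eqref{loew} for any surface beyond those already covered in the literature (torus, Klein bottle \cite{Bav1}, genus $2$ \cite{KS1}, large genus \cite{KS2,Gr1}). So the verdict is that there is a genuine gap --- indeed the entire content of the conjecture is the gap.

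To name the failure points concretely: (1) your inductive scheme runs in the wrong direction for difficulty --- companions of \eqref{21} have \emph{smaller} genus, so the induction would have to propagate a bound \emph{upward} in genus, but each companion step (cut-and-paste, transport across \eqref{53}, the coarea comparison of Section~\ref{six}) loses constants, and there is no identified mechanism by which ``stacking many such gains'' converges to $2/\sqrt{3}\approx 1.1547$ rather than drifting away from it; the paper's own execution of one such step lands at $1.333$, nowhere near the Loewner constant. (2) As you correctly observe, for $g\geq 3$ a generic conformal class is not hyperelliptic, so the entire companion machinery is unavailable without a reduction to the hyperelliptic locus, and no such reduction is known to be systole-nonincreasing. (3) For non-orientable $n\RP^2$ with $n\geq 4$ you would additionally need the orientable double cover $\Sigma_{n-1}$ to admit an antiholomorphic fixed-point-free involution compatible with a hyperelliptic structure, which is a further nontrivial constraint on the conformal class. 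Your closing paragraph, proposing to attack the $17$ exceptional genera of \cite{KS2} one at a time by analogues of the $3\RP^2$ argument, is a reasonable description of the state of the art, but it is a program, not a proof of \eqref{loew}.
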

The conjecture was proved for the torus by C. Loewner in '49 and for
the Klein bottle by C. Bavard \cite{Bav1}.  M. Gromov \cite{Gr1}
proved an asymptotic estimate which implies that every orientable
surface of genus greater than~$50$ satisfies Loewner's bound.  This
was extended to orientable surfaces of genus at least~$20$ by M. Katz
and S. Sabourau \cite{KS2}, and for the genus 2 surface in \cite{KS1}.

If the orientable double cover of the Klein surface is hyperelliptic,
then its metric can be averaged by the hyperelliptic involution~$J$ so
as to improve (i.e. increase) its systolic ratio \eqref{61}.  This
point was discussed in detail in \cite{BCIK1}.  Similarly, the ratio 
\begin{equation*}
\frac{\disp(\tau)^2}{\area}
\end{equation*}
defined in \eqref{61b} is increased by averaging.  Thus we may assume
without loss of generality that the metric on the orientable double
cover is already~$J$-invariant.

Consider a systolic loop on~$n\RP^2$, namely a loop of least length
which cannot be contracted to a point in~$n\RP^2$.  The loop is either
a 1-sided loop~$\gamma$, or a~$2$-sided loop~$\delta$.

\section{Outline of the argument}

Assume that we are in the situation of a~$2$-sided loop~$\delta
\subset K$.  Since~$\delta$ lifts to a closed loop~$\tilde \delta$ on
the orientable double cover~$\Sigma_g$, we have
\begin{equation*}
\sys(n\RP^2) = \sys(\Sigma_g).
\end{equation*}
In the special situation~$n=3$ and~$g=2$, we proceed as follows.
Consider the real model \eqref{51} of~$\Sigma_g$, and its three
companion tori of type~\eqref{52}.

For each companion torus, we pass to the quotient Klein bottle, and
find a systolic loop satisfying Bavard's inequality (which is stronger
than Loewner's bound).  We thus obtain three loops~$\delta_{a,b},
\delta_{b,c}, \delta_{a,c}$, and by our assumption the loops lift to
loops~$\tilde\delta_{a,b}, \tilde\delta_{b,c}, \tilde\delta_{a,c}$ on
the torus.  Each loop~$\tilde\delta$ projected to the sphere~$S^2$
defines a partition of the set of six branch points.

We consider the corresponding partitions of the set of six branch
points on~$S^2$.  If the three partitions are not identical, then the
corresponding loops~$\tilde \delta$ can be rearranged by cutting and
pasting (see next section), to give a loop on the Klein surface which
satisfies Bavard's inequality.

In the remaining case, all three loops define identical partitions of
the six branch points of~$\Sigma_g$.  This case will be handled in
Section~\ref{fourteen}.

\section{Cut and paste technique}

The argument in \cite{KS1} can be summarized as follows.

\begin{lemma}
Given a genus~$2$ surface of unit area, one can find a pair of
non-homotopic loops~$\ell_1$ and~$\ell_2$, of combined length
\begin{equation*}
|\ell_1| + |\ell_2| \leq 2 C_{{\rm Loewner}},
\end{equation*}
where
\begin{equation*}
C_{{\rm Loewner}}=\sqrt{\frac{2}{\sqrt{3}}}
\end{equation*}
(see \eqref{loew} above).  In particular, the shorter of the two is a
Loewner loop on the surface.
\end{lemma}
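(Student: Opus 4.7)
The plan is to exploit the hyperellipticity of every genus $2$ surface. By the averaging principle recalled in Section~9 (cf.~\cite{BCIK1}), I may assume the unit-area Riemannian metric on $\Sigma_2$ is invariant under the hyperelliptic involution $J$. The quotient
\[
\Sigma_2/J \;=\; S^2(2,2,2,2,2,2)
\]
is an orbifold sphere with six order-$2$ cone points (the Weierstrass points $W_1,\ldots,W_6$) and total area $1/2$. A simple closed curve on this quotient lifts under the double cover to a loop on $\Sigma_2$; the lift is connected (and of exactly twice the quotient length) if and only if the curve separates the Weierstrass points into two subsets of odd parity. This dictionary converts the lemma into an orbifold length problem on a sphere of area $1/2$.

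The first step is to exhibit, on $\Sigma_2/J$, two disjoint simple closed curves $\bar\ell_1,\bar\ell_2$, each producing a $(3,3)$-partition of the six Weierstrass points, chosen so that the lifts $\ell_1,\ell_2$ realize distinct free homotopy classes on $\Sigma_2$. The combinatorial flexibility needed here is supplied by the three natural pairings of Weierstrass points coming from the three companion tori of type \eqref{52}: different pairings induce different $(3,3)$-partitions and hence different homology classes on $\Sigma_2$, so at least two of the three sweepouts must yield non-homotopic lifts.

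The length control would come from a sweepout/coarea argument on the quotient orbifold. Parametrize a one-parameter family of disjoint separating loops by the area they enclose, and apply the spherical isoperimetric inequality corrected for the Gauss--Bonnet deficit of the six cone points, to bound the length of the median curve by $\tfrac{1}{2} C_{\text{Loewner}}$ in the quotient metric. Doubling to account for the lift, and summing over two disjoint median curves that together sweep out the full orbifold area, gives the target estimate
\[
|\ell_1|+|\ell_2| \;\leq\; 2 \, C_{\text{Loewner}}.
\]

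The principal obstacle, as I see it, is sharpness: the coefficient $2$ in front of $C_{\text{Loewner}}$ leaves no slack whatsoever, since the content of the ``in particular'' clause is that the shorter loop is itself a Loewner loop. This forces the coarea estimate to be optimized against the extremal $J$-symmetric (Bolza-type) metric, not just a generic one; any loss in the isoperimetric step will weaken the constant and destroy the corollary. A secondary, more combinatorial difficulty is ruling out the degenerate situation in which all three companion partitions would produce mutually freely homotopic lifts on $\Sigma_2$, which would collapse the ``non-homotopic pair'' conclusion; this is where the three-fold symmetry among the companion tori of Section~6 becomes essential.
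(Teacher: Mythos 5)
There is a genuine gap, and it sits exactly where you yourself flag the difficulty: the length control. The paper's proof (following \cite{KS1}) gets the constant $C_{\rm Loewner}$ by \emph{applying Loewner's torus inequality} to each of two companion tori of type \eqref{52}: the metric on $\Sigma_2$ pushes down to $S^2$ and pulls back to each companion torus with the same area, Loewner then hands you a noncontractible loop of length at most $C_{\rm Loewner}\sqrt{\area}$ on each torus, and a cut-and-paste rearrangement of segments of these two loops produces the pair $\ell_1,\ell_2$ on $\Sigma_2$. Your proposal never invokes Loewner's inequality; you try to re-derive the bound on the quotient orbifold $S^2(2,2,2,2,2,2)$ by a sweepout/coarea argument combined with ``the spherical isoperimetric inequality corrected for the Gauss--Bonnet deficit.'' This cannot work: for an arbitrary Riemannian metric on the sphere there is no isoperimetric \emph{upper} bound on the length of a curve in terms of the area it encloses, and a coarea estimate requires a lower bound on the width of the sweepout, which is itself a systolic-type statement. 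Moreover, the constant $\sqrt{2/\sqrt{3}}$ is sharp only for the flat hexagonal torus, and its derivation requires conformal uniformization plus averaging over the deck group; any pointwise isoperimetric or coarea argument yields a strictly worse constant (of the order of Pu--Hebda-type bounds), which, as you correctly observe, destroys the ``in particular'' clause since there is no slack at all.

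A secondary problem is that your dictionary between curves on the orbifold and loops on $\Sigma_2$ is disconnected from the source of the length bound. The short loops whose existence one can actually certify are Loewner loops on the companion tori; their projections to $S^2$ need not be $(3,3)$-separating curves (they may enclose one or two branch points, or be non-simple), so they need not lift to closed loops on $\Sigma_2$ at all. Handling precisely this failure is the role of the cut-and-paste step in the paper's argument: one intersects the two projected loops and reassembles arcs so that each reassembled loop encloses an even number of the six branch points and hence lifts closed. Your proposal replaces this with an unsupported assertion that suitable $(3,3)$-separating median curves of controlled length exist, which is the content of the lemma rather than a step toward it.
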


The proof can be summarized as follows.  We exploit hyperellipticity
to construct a pair of companion tori.  We then apply Loewner's torus
inequality to find Loewner loops on each of the two tori.  Finally, we
apply a cut and paste technique to rearrange segments of the two loops
into a pair of loops that lift to the genus~$2$ surface.

Consider a systolic loop~$\gamma \subset K$ of a Klein bottle~$K$.  A
systolic loop is necessarily simple.  It satisfies Bavard's inequality
\cite{Bav1}
\begin{equation*}
|\gamma| \leq C_{{\rm Bavard}} \sqrt{\area(K)},
\end{equation*}
where~$C_{{\rm Bavard}}= \sqrt{\frac{\pi}{\sqrt{8}}}$.  Note
that~$C_{{\rm Bavard}} < C_{{\rm Loewner}}$ (Bavard's inequality gives
a stronger bound than Loewner's).

\begin{definition}
A loop satisfying Bavard's inequality will be called a Bavardian loop.
\end{definition}

\section{Proof:~$1$-sided systolic loop}

We now describe a procedure for constructing short loops on a Klein
surface.  Given a genus~$2$ surface \eqref{51}, consider a companion
Klein bottle
\begin{equation*}
K_{a,b} = \T_{a,b}/\tau.
\end{equation*}
Consider a systolic loop on~$K_{a,b}$.  There are two cases to
consider, according to the sidedness of the systolic loop.  In this
section, we consider the case when the systolic loop~$\gamma$ is
one-sided, which turns out to be the easier of the two.  Then~$\gamma$
lifts to a path connecting a pair of opposite points of the torus.
Let~$\tilde \gamma$ be its connected double cover on the
torus~$\T_{a,b}$.  

\begin{lemma}
The loop~$\tilde\gamma$ contains a real point~$p$ such that the points
$p$ and~$\tau(p)$ decompose~$\tilde\gamma$ into a pair of paths
\begin{equation*}
\tilde\gamma= \gamma_+ \cup \gamma_- 
\end{equation*}
such that each of~$\gamma_+, \gamma_-$ projects to a loop on~$S^2$.
\end{lemma}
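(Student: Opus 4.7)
The plan is to exploit the equivariance of the projection $P_x:\T_{a,b}\to S^2$ under the antiholomorphic involution~$\tau$. Because~$\gamma$ is one-sided on the companion Klein bottle $K_{a,b}=\T_{a,b}/\tau$, its preimage~$\tilde\gamma\subset\T_{a,b}$ is connected and covers~$\gamma$ twofold. Hence~$\tilde\gamma$ is topologically a circle on which the restriction of~$\tau$ is the deck transformation of this double cover; that restriction is a fixed-point-free involution, i.e.\ the antipodal map~$A$ of~$S^1$.

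Parametrize $\tilde\gamma$ as a map $\tilde\gamma:S^1\to\T_{a,b}$ and form the composite $f=P_x\circ\tilde\gamma:S^1\to S^2=\hat\C$. Since~$P_x\circ\tau$ coincides with complex conjugation on~$\hat\C$ followed by~$P_x$, we get the equivariance
\begin{equation*}
f(A(t))=\overline{f(t)} \qquad \text{for all } t\in S^1.
\end{equation*}
The main step is to find a parameter~$t_0$ with~$f(t_0)\in\hat\R$. If no such~$t_0$ existed, the image~$f(S^1)$ would lie in the disjoint union $S^2\setminus\hat\R$ of the two open hemispheres. Since complex conjugation swaps these hemispheres, the equivariance displayed above would force~$f(S^1)$ to meet each of them, contradicting the connectedness of~$f(S^1)$. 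Therefore such a~$t_0$ exists, and I set~$p=\tilde\gamma(t_0)$.

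To conclude, observe that $\tau(p)=\tilde\gamma(A(t_0))$ is again a real point, since~$\tau$ permutes the two real circles of~$\T_{a,b}$ (Section~\ref{six}); moreover $P_x(\tau(p))=\overline{P_x(p)}=P_x(p)$ because~$P_x(p)\in\hat\R$. The points~$p$ and~$\tau(p)$ are distinct on~$\tilde\gamma$ because~$\tau|_{\tilde\gamma}$ is fixed-point-free, so they cut~$\tilde\gamma$ into two arcs~$\gamma_+$ and~$\gamma_-$, each of whose endpoints project to the common point~$P_x(p)\in S^2$. Hence $P_x(\gamma_+)$ and $P_x(\gamma_-)$ are closed loops on~$S^2$, as claimed. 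The only point demanding care is the existence of the real point~$p$ on~$\tilde\gamma$, and that is exactly what the connectedness argument delivers; the rest of the statement is formal consequence of the equivariance of~$P_x$.
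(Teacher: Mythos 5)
Your proof is correct and follows essentially the same route as the paper's: both locate a point of~$\tilde\gamma$ lying over the equator by combining the $\tau$-equivariance of~$P_x$ with the connectedness of the projected loop, and then cut~$\tilde\gamma$ at~$p$ and~$\tau(p)$ to get two arcs whose projections close up. Your intermediate connectedness argument merely makes explicit the paper's terser assertion that a conjugation-invariant connected loop in~$S^2$ must meet~$\hat\R$.
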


\begin{proof}
Note that~$\tilde \gamma$ is invariant under the action of~$\tau$ on
the torus.  The loop~$\tilde \gamma$ projects to a loop
\begin{equation*}
\tilde \gamma_0 \subset S^2.
\end{equation*}
The connected loop~$\tilde \gamma_0$ is invariant under complex
conjugation.  Therefore it must meet the equator (see
Section~\ref{six}).  Here the equator is the closure of~$\R$ in~$\hat
\C$.  A point~$p_0\in \hat \R$ where~$\tilde \gamma_0$ meets the
equator corresponds to a real point~$p\in \tilde \gamma$ such that~$p$
and~$\tau(p)$ have the same image in the sphere, namely~$p_0$.
Thus~$\tilde \gamma_0$ necessarily has a self-intersection,
unlike~$\gamma$ itself.

It may be helpful to think of~$\tilde \gamma_0$ as a figure-eight
loop, with its midpoint on the equator.  In fact, the original
loop~$\gamma$ on the bottle lifts to a path~$\gamma_+$ joining~$p$
and~$\tau(p)$, which then projects to half the loop~$\tilde\gamma_0$,
forming one of the hoops of the figure-eight.  If we let~$\gamma_-=
\tau(\gamma_+)$, we can write
\begin{equation*}
\tilde\gamma= \gamma_+ \cup \gamma_- .
\end{equation*}
Thus the loop~$\tilde\gamma_0 \subset S^2$ is the union of two loops
\begin{equation*}
\tilde\gamma_0= P_x(\gamma_+) \cup P_x(\gamma_-),
\end{equation*}
where~$P_x: \Sigma_2 \to S^2$ is defined (on the affine part) by the
projection to the~$x$-coordinate, see \eqref{12}.
\end{proof}

Next, we would like to transplant~$\gamma_+$ to the Klein surface
$3\RP^2$.  The loop~$P_x(\gamma_+)\subset S^2$ lifts to a path
\begin{equation*}
\tilde{\tilde\gamma} \subset \Sigma_2
\end{equation*}
which may or may not close up, depending on the position of the third
pair~$(c,\bar c)$ of branch points of~$\Sigma_2\to S^2$.  If
$\tilde{\tilde\gamma}$ is already a loop, then it projects to a
Bavardian loop on the Klein surface~$3\RP^2=\Sigma/\tau$.  In the
remaining case, the path~$\tilde{\tilde\gamma}$ connects a pair of
opposite points~$\tilde p, \tau(\tilde p)$ on the surface~$\Sigma_2$,
where
\begin{equation*}
P_x(\tilde p)= P_x(\tau(\tilde p))= p_0.
\end{equation*}
Therefore~$\tilde{\tilde\gamma}$ projects to a Bavardian loop in this
case, as well.

\section{Proof continued:~$2$-sided systolic loop}

It remains to consider the case when each systolic loop~$\delta$ of
each of the three companion Klein bottles
\begin{equation*}
K_{a,b}, K_{a,c}, K_{b,c}
\end{equation*}
of the genus~$2$ surface \eqref{51} is~$2$-sided.  Thus each of these
Bavardian loops~$\delta_{a,b},\delta_{a,c},\delta_{b,c}$ lifts to a
closed loop on the corresponding orientable double cover.  The
resulting loop on the corresponding torus will be denoted
$\tilde\delta$.  The projection
\begin{equation*}
P_x(\tilde\delta) \subset S^2
\end{equation*}
will be denoted~$\delta_0 = P_x(\tilde\delta)$.

\begin{lemma}
If~$\delta_0$ meets the equator, then~$3\RP^2$ contains a Bavardian
loop.
\end{lemma}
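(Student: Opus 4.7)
The plan is to extend the transplantation technique from the one-sided case. Given a point $q \in \tilde\delta$ with $q_0 := P_x(q) \in \hat\R$, I choose a lift $\tilde p \in \Sigma_2$ of $q_0$ lying on the equator of $\Sigma_2$. Such a $\tilde p$ exists since, by Section~\ref{six}, the preimage of $\hat\R$ under $P_x:\Sigma_2 \to S^2$ is a single circle mapping onto $\hat\R$. Crucially, on this equator $\tau$ and $J$ coincide pointwise, since the equator is the fixed set of $\tau \circ J$.

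I then lift the loop $\delta_0 = P_x(\tilde\delta)$ to a path $\tilde{\tilde\delta} \subset \Sigma_2$ starting at $\tilde p$, via the branched double cover $P_x: \Sigma_2 \to S^2$. Its terminal endpoint lies in $P_x^{-1}(q_0) = \{\tilde p, J(\tilde p)\}$, and $J(\tilde p) = \tau(\tilde p)$ since $\tilde p$ is on the equator. In either sub-case the path $\tilde{\tilde\delta}$ descends to a closed loop on $3\RP^2 = \Sigma_2/\tau$: if the endpoint is $\tilde p$ the path is already closed, and if it is $\tau(\tilde p)$ then the two endpoints are identified in the quotient. Since each cover in \eqref{53} is a local isometry away from branch points, and $\area(K_{a,b}) = \area(3\RP^2)$ by the area lemma, the resulting loop on $3\RP^2$ has length at most $|\tilde{\tilde\delta}| = |\delta_0| = |\tilde\delta| = |\delta|$, and therefore satisfies Bavard's inequality.

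The main obstacle I foresee is verifying that the resulting loop is not contractible on $3\RP^2$, so that its length genuinely bounds the systole. I expect this to follow from a homological argument tracking the class of $\delta$ through the chain of maps in \eqref{53}: since $\delta$ is essential on $K_{a,b}$ and its lift $\tilde\delta$ is essential on $\T_{a,b}$, the final image should remain essential after descending through $\Sigma_2 \to 3\RP^2$. The subtle case is when $\tilde{\tilde\delta}$ happens to be null-homotopic in $\Sigma_2$ even though $\tilde\delta$ is essential in $\T_{a,b}$; in that situation one may need to replace $\tilde{\tilde\delta}$ by the companion path $\tau(\tilde{\tilde\delta})$, or combine both lifts by cutting and pasting at equatorial points in the spirit of the one-sided argument, in order to secure a nontrivial class in $H_1(3\RP^2;\Z/2)$.
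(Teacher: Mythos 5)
Your construction is exactly the paper's proof: pick $p_0 \in \delta_0 \cap \hat\R$, lift $\delta_0$ to $\Sigma_2$ starting at one of the two points $p,\tau(p)$ lying over $p_0$, and note that the lift either closes up or terminates at $\tau(p)$, so that in either case it descends to a loop on $3\RP^2=\Sigma_2/\tau$ of the required length. The non-contractibility point you flag at the end is likewise left implicit in the paper, whose proof records only the descent and (tacitly) the length bound, so your proposal matches the paper's argument.
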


\begin{proof}
Let
\begin{equation*}
p_0 \in \delta_0 \cap \hat \R.
\end{equation*}
Let~$p, \tau(p) \in \Sigma_2$ be the points above it in~$\Sigma_2$.
We lift the path~$\delta_0$ to a path~$\delta_+ \subset \Sigma_2$
starting at~$p$.  If~$\delta_+$ closes up, its projection to~$3\RP^2$
is the desired Bavardian loop.  Otherwise, the path~$\delta_+$
connects~$p$ to~$\tau(p)$.  In this case as well,~$\delta_+$ projects
to a Bavardian loop on~$3\RP^2=\Sigma_2/\tau$.
\end{proof}

Thus we may assume that the~$\delta_0$ does not meet the equator of
$S^2$.

\begin{lemma}
Let~$\delta$ be a systolic loop on a companion torus~$\T_{a,b}$, and
assume~$\delta_0$ does not meet the equator.  Then~$\delta_0$ is a
simple loop.
\end{lemma}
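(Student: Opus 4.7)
The plan is to use the equator hypothesis to confine $\delta$ to an annular region of $\T_{a,b}$, and then to classify $\delta$ by its isotopy class there. Since $\delta_0$ is connected (as the continuous image of a circle) and misses the equator $\hat\R$, it lies entirely in one of the two open hemispheres, say the upper one $U^+$. By Riemann-Hurwitz, the preimage $A := P_x^{-1}(U^+)$ is an open annulus, realized as a $2$-to-$1$ branched cover of the disk $U^+$ branched at the two complex points $a, b \in U^+$. Thus $\delta$ is a simple closed curve in $A$, and the hyperelliptic involution $J$ restricts to a holomorphic involution of $A$ with exactly two isolated fixed points, at the lifts of $a$ and $b$.

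I would then perform a case analysis by the isotopy class of $\delta$ in $A$. If $\delta$ bounds a disk $D \subset A$, then since the fixed set of any smooth involution on a disk is contractible, $D$ contains either $0$ or $1$ branch points. In the $0$-case one shows $J(D) \cap D = \emptyset$ (a nested configuration would force $J(D) = D$, and then Brouwer would give a fixed point in $D$), so $P_x|_D$ is injective and $\delta_0 = \partial P_x(D)$ is a simple loop. In the $1$-case, $J(D) = D$ and $P_x|_D$ is a $2$-to-$1$ branched cover onto a disk in $U^+$, so again $\delta_0 = \partial P_x(D)$ is a simple loop. If instead $\delta$ is core-isotopic in $A$, it separates $A$ into two sub-annuli, and $J$ swaps the two boundary circles of $A$ (each maps homeomorphically to the equator). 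If $J(\delta) = \delta$, then $J$ swaps the sub-annuli, forcing both branch points to lie on $\delta$, and $\delta_0$ is an embedded arc from the image of $a$ to that of $b$. If $J(\delta) \ne \delta$, then $\delta$ and $J(\delta)$ are two distinct core-parallel simple closed curves of equal length (since $J$ is an isometry of the $J$-invariant metric on $\T_{a,b}$), and each self-intersection of $\delta_0$ at a non-branch point corresponds to a transverse crossing of $\delta$ with $J(\delta)$.

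The main obstacle will be ruling out such transverse crossings in this last subcase. My strategy is a surgery argument: exchanging arcs between $\delta$ and $J(\delta)$ at a pair of crossings produces two new closed loops whose total length is strictly less than $|\delta| + |J(\delta)| = 2|\delta|$, so at least one of them is strictly shorter than $\delta$. Tracking its homotopy class and projecting down through $\T_{a,b} \to K_{a,b}$ yields a non-contractible loop on $K_{a,b}$ shorter than the systolic loop there, contradicting systolicity. Hence $\delta \cap J(\delta) = \emptyset$, the map $P_x|_\delta$ is injective, and $\delta_0$ is a simple loop.
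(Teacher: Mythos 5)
Your argument reaches the right conclusion but travels a genuinely different road from the paper's. The paper stays downstairs on the sphere: it regards $\delta_0$ as a $4$-valent graph $\Delta$ in the hemisphere, passes to the ``fattened'' region $\Delta_{\rm fat}$ obtained by filling in the bounded complementary faces, observes that both branch points $a,b$ lie in $\Delta_{\rm fat}$, and notes that the outer boundary curve $\ell\subset\Delta$ lifts to a noncontractible loop on the torus; if $\delta_0$ were non-simple, $\ell$ would have corners and could be shortened, contradicting systolicity. You instead work upstairs: you identify $P_x^{-1}(U^+)$ as an annulus $A$, observe that since the systolic loop $\delta$ is simple, every self-intersection of $\delta_0$ must come from a point of $\delta\cap J(\delta)$, and then kill those crossings by the exchange argument for two freely homotopic shortest loops. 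This is exactly the cut-and-paste technique of \cite{KS1} that the paper invokes elsewhere, and your reduction ``self-intersections of $\delta_0$ $\leftrightarrow$ crossings of $\delta$ with $J(\delta)$'' is arguably cleaner and more checkable than the fat-graph picture. A bonus of your route is that it exposes the degenerate subcase $J(\delta)=\delta$, where $\delta_0$ collapses to an embedded arc joining $a$ to $b$ --- a case the paper's proof silently absorbs.

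Two points in your sketch need tightening. First, the disk case is vacuous: $\delta$ is noncontractible on $\T_{a,b}$ (a $2$-sided systolic loop on $K_{a,b}$ lifts to a noncontractible loop on the torus, since $\pi_1(K)$ is torsion-free), so it cannot bound a disk in $A$; you may delete that case rather than argue it. Second, and more substantively, exchanging arcs at two crossings of $\delta$ and $J(\delta)$ gives two loops whose total length \emph{equals} $|\delta|+|J(\delta)|$ --- it is not strictly less; strictness only comes from smoothing the resulting corners, and before you can smooth you must know the shorter surgered loop is noncontractible. The clean way to finish is the standard one: $\delta$ and $J(\delta)$ are isotopic simple closed curves, so if they cross transversally they bound an innermost bigon, and swapping the two bigon sides produces a loop homotopic to $\delta$ (hence essential) of length at most $\sys$ with corners, which smooths to a contradiction; alternatively, on the torus one can use the homology count $[\alpha_1\beta_2]+[\beta_1\alpha_2]=2[\delta]\neq 0$ to certify that a surgered loop is essential. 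With that repair your proof is complete.
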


\begin{proof}
By hypothesis, the loop~$\delta_0$ lies in a hemisphere.  The typical
case to keep in mind of a non-simple loop is a figure-eight.  Without
loss of generality, we may assume that~$\delta_0$ has odd winding
number with respect to the ramification points
\begin{equation*}
a,b\in S^2.
\end{equation*}
We will think of the curve~$\delta_0$ as defining a connected
graph~$\Delta \subset \R^2$ in a plane.  The vertices of the graph are
the self-intersection points of~$\delta_0$.  Each vertex necessarily
has valence~$4$.  By adding the bounded ``faces'', we obtain a ``fat''
graph, denoted~$\Delta_{{\rm fat}}$ (the typical example is the
interior of the figure-eight).  More precisely, the complement~$\R^2
\setminus \Delta$ has a unique {\em unbounded\/} connected component,
denoted~$E \subset \R^2 \setminus \Delta$.  Its complement in the
plane, denoted
\begin{equation*}
\Delta_{{\rm fat}}= \R^2 \setminus E,
\end{equation*}
contains the graph~$\Delta$ as well as its bounded ``faces''.  The
important point is that both branch points~$a,b$ of the double cover
$P_x: \T_{a,b}^2 \to S^2$ must lie inside the connected
region~$\Delta_{{\rm fat}}$:
\begin{equation*}
a,b\in \Delta_{{\rm fat}}.
\end{equation*}
The boundary of~$\Delta_{{\rm fat}}$ can be parametrized by a closed
curve~$\ell$, thought of the boundary of the outside
component~$E\subset \R^2$ so as to define an orientation on~$\ell$ (in
the case of the figure-eight loop, this results in reversing the
orientation on one of the hoops of the figure-eight).  Note that~$\ell
\subset \Delta$ is a subgraph.  Since both branch points lie inside,
the loop~$\ell$ lifts to a loop on the torus which cannot be
contracted to a point.  If~$\delta_0$ is not simple, then~$\ell$ must
contain ``corners'' (as in the case of a figure
eight-shaped~$\delta_0$) and can therefore be shortened, contradicting
the hypothesis that~$\delta$ is a systolic loop.
\end{proof}

We may thus assume that the simple loop~$\delta_0 \subset S^2$
separates the six points~$a,\bar a, b, \bar b, c, \bar c$ into two
triplets~$(a,b,c)$ and~$(\bar a, \bar b, \bar c)$.  Hence its
connected double cover in~$\Sigma_2$ is isotopic to the equatorial
circle of~$\Sigma_2$.  The latter is a double cover of the equator
of~$S^2$ (see Section~\ref{six}).

\section{Hyperellipticity for Klein surfaces}

It may be helpful to think of the surface~$3\RP^2$ as a double cover
of the northern hemisphere of~$S^2$, with the equator included.  The
cover is branched along the equator as well as at~$3$ additional
branch points, namely the points~$a,b,c$ of the (standard)
hyperelliptic cover~$\Sigma_2 \to S^2$.  Note that the three remaining
branch points~$\bar a, \bar b, \bar c$ are mapped to~$a,b,c$ by the
involution~$\tau$.

A horizontal circle on~$S^2$ with small positive latitude (south of
all three branch points) is double-covered by a circle on~$3\RP^2$
which can be thought of as the boundary of a Mobius strip, the central
circle of which is the equator.  The simple loop double covering
$\delta_0$ separates the surface into two surfaces with boundary: a
torus with circular boundary, and a Mobius strip.  The results of the
previous section are summarized in the diagram of
Figure~\ref{hemisphere}.

\begin{figure}
\includegraphics[height=3in]{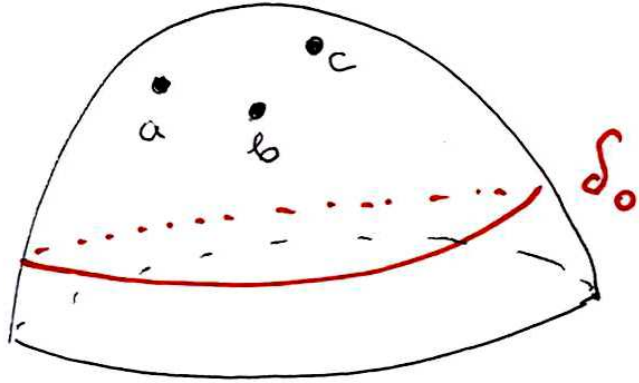}
\caption{Klein surface as a hemispherical double cover}
\label{hemisphere}
\end{figure}

\begin{proposition}
\label{141}
The Klein surface~$3\RP^2$ either contains a Bavardian loop, or admits
a simple loop which separates it into a torus with a disk
removed~$\Sigma_{1,1}$ and a Mobius strip~$\Mob$:
\begin{equation*}
3\RP^2 = \Sigma_{1,1} \cup_{S^1}^{\phantom{I}} \Mob
\end{equation*}
corresponding to a topological decomposition
\begin{equation*}
3\RP^2 = \T^2 \# \RP^2,
\end{equation*}
such that moreover
\begin{itemize}
\item
the torus with a disk removed~$\Sigma_{1,1}$ contains the three
isolated branch points;
\item
the Mobius strip contains the equatorial circle;
\item
the separating loop is~$J$-invariant;
\item
the separating loop is of length at most~$2C_{{\rm Bavard}}$;
\item
the separating loop double-covers a loop
\begin{equation*}
\delta_0\subset S^2
\end{equation*}
which lifts to a systolic loop on a companion torus.
\end{itemize}
\end{proposition}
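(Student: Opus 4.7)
The plan is to combine the case analyses of Sections~12 and~13 via a lift-and-quotient construction. If any of the systolic loops on the three companion Klein bottles $K_{a,b}, K_{a,c}, K_{b,c}$ is one-sided, Section~12 already produces a Bavardian loop on $3\RP^2$ and we are done. Otherwise each systolic loop $\delta_{a,b}$, $\delta_{a,c}$, $\delta_{b,c}$ is two-sided, lifting to a loop $\tilde\delta$ on the corresponding companion torus with image $\delta_0 \subset S^2$. By the lemmas of Section~13, if any $\delta_0$ meets the equator then a Bavardian loop on $3\RP^2$ is again produced; otherwise each $\delta_0$ is a simple loop that, by the concluding observation of Section~13, separates the six branch points of $P_x\colon \Sigma_2 \to S^2$ into the conjugate triplets $\{a,b,c\}$ and $\{\bar a, \bar b, \bar c\}$.

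Fix one such $\delta_0$, say arising from $K_{a,b}$. Since $\delta_0$ separates the six branch points into odd-size subsets, its preimage $\hat\delta \subset \Sigma_2$ under $P_x$ is a single simple closed curve, namely a connected double cover of $\delta_0$ which is invariant under the hyperelliptic involution $J$. Commutativity of $J$ and $\tau$ makes $\tau(\hat\delta)$ also $J$-invariant, while disjointness of $\delta_0$ and $\overline{\delta_0}$ (they lie in opposite hemispheres since $\delta_0$ avoids the equator) forces $\hat\delta \cap \tau(\hat\delta) = \emptyset$. The $\tau$-invariant union $\hat\delta \cup \tau(\hat\delta)$ therefore descends to a single simple closed curve $\bar\delta \subset 3\RP^2 = \Sigma_2/\tau$, invariant under the involution $\bar J$ induced by $J$, and double-covering the image of $\delta_0$ in $D^2 = S^2/\tau$.

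For the length bound, push the $J$-invariant metric on $\Sigma_2$ down to $S^2$ and pull it back to the companion torus $\T_{a,b}$, making $P_x$ a local isometry off the branch points. Then $|\hat\delta| = 2|\delta_0| = 2|\tilde\delta| = 2|\delta_{a,b}|$, while $|\bar\delta| = |\hat\delta|$ since $\hat\delta \to \bar\delta$ is a local isometry; invoking Bavard's inequality on $K_{a,b}$ together with the area identity $\area(K_{a,b}) = \area(3\RP^2)$ from Section~8 yields $|\bar\delta| \le 2 C_{{\rm Bavard}} \sqrt{\area(3\RP^2)}$. For the topological decomposition, $\delta_0 \cup \overline{\delta_0}$ splits $S^2$ into three regions $D_1 \supset \{a,b,c\}$, $D_3 \supset \{\bar a, \bar b, \bar c\}$ and a middle annular region $D_2^*$ containing the equator. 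Riemann--Hurwitz gives $P_x^{-1}(D_1) \cong P_x^{-1}(D_3) \cong \Sigma_{1,1}$, while the parity argument applied to an essential loop of $D_2^*$ (enclosing three branch points) shows $P_x^{-1}(D_2^*)$ is a single annulus. The involution $\tau$ swaps the two $\Sigma_{1,1}$'s and acts on the central annulus as a fixed-point-free involution with the equatorial circle as axis, so the $\tau$-quotient produces $3\RP^2 = \Sigma_{1,1} \cup_{\bar\delta} \Mob$, with the three isolated branch points contained in $\Sigma_{1,1}$ and the equatorial circle as the core of $\Mob$.

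The main technical care required is the parity-based bookkeeping of branch points across the several branched double covers, which governs simultaneously the connectedness of $\hat\delta$, the connectedness of $P_x^{-1}(D_2^*)$, and the gluing pattern of the pieces after passing to $3\RP^2$; once this is in hand, the length estimate and the topological splitting both follow directly from the preimage calculations above.
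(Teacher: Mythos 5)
Your proposal is correct and follows essentially the same route as the paper: Proposition~\ref{141} is stated there as a summary of Sections~10--13 (the one-sided case, the equator-meeting case, simplicity of~$\delta_0$, and the reduction to the conjugate-triplet partition) combined with the hemispherical double-cover picture, and your writeup reconstructs exactly that chain, merely performing the $\tau$-quotient explicitly upstairs on~$\Sigma_2$ via the three-region decomposition of~$S^2$ by~$\delta_0\cup\overline{\delta_0}$. The one step you take on faith --- as does the paper --- is that~$\delta_0$ may be assumed to separate the six branch points into the two conjugate triplets, which ultimately rests on the cut-and-paste argument that is only sketched in Sections~10--11.
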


\section{Improving Gromov's~$3/4$ bound}
\label{fourteen}

Gromov's general~$3/4$ bound for aspherical surfaces,
\begin{equation*}
\area \geq \frac{3}{4} \sys^2,
\end{equation*}
appeared in \cite[Corollary~5.2.B]{Gr1}.  We would like to improve the
bound in our case of~$3\RP^2$.  The bound can be written as
\begin{equation*}
\frac{\sys^2}{\area} \leq 1.3333\ldots
\end{equation*}

\begin{theorem}
\label{15b}
The bound
\begin{equation}
\label{151b}
\frac{\sys^2}{\area} \leq 1.333
\end{equation}
is satisfied by every metric on the Klein surface~$3\RP^2$.
\end{theorem}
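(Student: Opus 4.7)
The plan is to combine the structural dichotomy of Proposition~\ref{141} with a coarea estimate against the separating loop. First I would dispose of the easy case: if $3\RP^2$ already contains a Bavardian loop $\gamma$, then, since the constructions of Sections~12 and~13 produce loops that are non-contractible in $3\RP^2$ (they are obtained by lifting and then projecting systolic loops of companion Klein bottles), one has
\[
\frac{\sys^2}{\area} \;\leq\; \frac{|\gamma|^2}{\area} \;\leq\; C_{\rm Bavard}^2 \;=\; \frac{\pi}{\sqrt{8}} \;\approx\; 1.1107,
\]
which is comfortably below $1.333$.

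The substantive case is the remaining one from Proposition~\ref{141}, in which $3\RP^2 = \Sigma_{1,1}\cup_{S^1}\Mob$ is split by a $J$-invariant separating loop $\delta_s$ of length $L\leq 2C_{\rm Bavard}\sqrt{\area}$. Since $\delta_s$ separates $3\RP^2$ into two pieces neither of which is a disk, $\delta_s$ is non-contractible in $3\RP^2$, and in particular $L\geq\sys$. I would then apply the coarea formula to the distance function $f(x)=\dist(x,\delta_s)$ on each of the two pieces. On $\Mob$, for $t$ less than the inradius $w_m$, the level set $\{f=t\}$ is a $2$-sided loop isotopic to $\delta_s$ in $3\RP^2$ and therefore non-contractible, and hence of length at least $\sys$. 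Integration gives $\area(\Mob)\geq\sys\cdot w_m$. A parallel argument on $\Sigma_{1,1}$, valid on the range of $t$ before the level set's isotopy type changes, gives $\area(\Sigma_{1,1})\geq \sys\cdot w_t$. Summing, $\area \geq \sys\cdot(w_m+w_t)$.

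The main obstacle is to produce the lower bound $w_m+w_t\geq\sys/1.333$, which would close the argument. For this I would exploit the additional short non-contractible witnesses sitting in each piece: the equator of $3\RP^2$, which is a one-sided non-contractible loop in $\Mob$ of length $\geq\sys$ lying at distance $w_m$ from $\delta_s$; and an analogous primitive non-contractible loop in $\Sigma_{1,1}$ lying at distance $w_t$ from $\delta_s$. Combining these topological constraints with the smallness hypothesis $L\leq 2C_{\rm Bavard}\sqrt{\area}$, and tracking Gromov's original coarea proof of the $\frac{4}{3}$ bound with this extra structure, should produce the tiny improvement in the fourth decimal place. The delicate part is precisely that the target constant $1.333$ is only $0.00033$ below $\frac{4}{3}$, so every intermediate estimate must be exploited with essentially sharp constants, which is consistent with the ``remarkable coincidence'' flagged in the introduction.
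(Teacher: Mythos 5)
Your setup is on track --- the dichotomy from Proposition~\ref{141}, the disposal of the Bavardian case, and the idea of running a coarea argument relative to the separating loop are all consistent with the paper --- but the proof has a genuine gap exactly where you acknowledge ``the main obstacle.'' The bound $\area \geq \sys\cdot(w_m+w_t)$ with the follow-up claim $w_m+w_t\geq \sys/1.333$ is never established, and the route you sketch cannot establish it: a coarea estimate in which every level set is merely bounded below by $\sys$ (or by $\beta-2x$) is a linear, trapezoid-type bound, and the paper computes explicitly that this yields only $|\Mob|\geq h(\beta+\delta_0)\approx .29$, falling short of the needed $.324$. Since Gromov's $\frac43$ is sharp for Finsler metrics, no purely coarea-based refinement of this kind can beat it; some genuinely Riemannian input is required.

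The paper closes the gap with three ingredients absent from your proposal. First, a preliminary coarea estimate (distance from the equator, using that the lift of $\delta_0$ is systolic on the companion torus) pins down the length of the separating loop's projection: $|\delta_0|\leq\beta^{-1}\approx .86613$, and WLOG equality. Second, on the M\"obius side, instead of the linear estimate it invokes the Blatter--Sakai comparison: the area of the band is bounded below by half the area of an $h$-neighborhood of the equator of a round sphere of circumference $2\beta$, computed via Archimedes's theorem as $\frac{2\beta^2\sin\gamma}{\pi}$ with $\gamma=h\pi/\beta$; this nonlinear bound gives $.32453>.324$. Third, on the torus side it does not use coarea at all: it caps the boundary circle (of length $2|\delta_0|$) with a round hemisphere of area $\frac{2}{\pi\alpha}$ and applies Loewner's inequality to the resulting closed torus of area at most $\frac{2}{\pi\alpha}+.676$, producing a loop of square-length $\approx 1.33204<1.333$. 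Your appeal to the equator and to ``tracking Gromov's original proof with extra structure'' does not substitute for any of these steps, so as written the argument does not reach the constant $1.333$.
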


Note that the highest systole of a hyperbolic metric on~$3\RP^2$ was
identified by H.~Parlier \cite{Par} to be~$\arc\!\cosh
\frac{5+\sqrt{17}}{2} = 2.19\ldots$ resulting in a systolic ratio of
$.76\ldots$.  The orientable double cover of this hyperbolic Klein
surface is the Parlier-Silhol curve of genus~$2$ \cite{Par,Si}.

\begin{proof}
A partition of~$3\RP^2$ into a torus with a disk removed and a Mobius
strip was constructed in Proposition~\ref{141}.  We normalize~$3\RP^2$
to unit area.  We will find a short loop on the torus with a disk
removed if its area is at most~$.676$, and on the Mobius strip if its
area is at most~$.324$.

Let~$\alpha = 1.333$.  To prove Theorem~\ref{15b}, we need to locate
an essential loop of length at most
\[
\beta = \sqrt{\alpha} \approx 1.15456 .
\]
If there is no such loop, the distance from each branch point to the
equator must be at least~$\tfrac{1}{2}\beta$.  We apply the
coarea formula to the distance function from the equator.  Since the
lift of~$\delta_0$ to the torus~$\T_{a,b}$ is a systolic loop by
Proposition~\ref{141}, we obtain
\begin{equation*}
2 |\delta_0| \tfrac{1}{2} \beta \leq 1,
\end{equation*}
as the torus is normalized to unit area.  Hence~$|\delta_0| \leq
\beta^{-1}$.  The argument with attaching a hemisphere that we will
present in Lemma~\ref{163} produces an estimate that becomes far more
powerful as~$|\delta_0|$ decreases.  Therefore there is no loss of
generality in assuming that equality takes place:
\begin{equation*}
|\delta_0| = \beta^{-1} \approx .86613 .
\end{equation*}
The theorem now results from the two lemmas below.
\end{proof}

\begin{lemma}
If the Mobius strip has area at most~$| \Mob | \leq .324$ then it
contains an essential loop of square-length less than~$\alpha=1.333$.
\end{lemma}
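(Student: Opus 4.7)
The plan is to cap off the Mobius strip along its boundary by a round spherical hemisphere of matching circumference, apply Pu's inequality on the resulting $\RP^2$, and then push the short essential loop produced by Pu back into $\Mob$ along the seam without increasing its length.

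By Proposition~\ref{141}, the boundary $\partial\Mob$ is a double cover of $\delta_0$ and hence has length $\ell = 2|\delta_0| = 2\beta^{-1}$.  I attach a round hemisphere $H$ of radius $r = \ell/(2\pi) = 1/(\pi\beta)$ to $\Mob$ along $\partial\Mob$ by an isometry of circles, obtaining a topological $\RP^2$ with a $C^0$ Riemannian metric of area at most
\[
|\Mob| + \frac{\ell^2}{2\pi} \leq .324 + \frac{2}{\pi\alpha}.
\]
Pu's inequality $\sys^2(\RP^2) \leq \tfrac{\pi}{2}\area(\RP^2)$ (applied to a smoothing if one wishes to be scrupulous) then bounds $\sys^2(\RP^2)$ from above by $\tfrac{\pi}{2}(.324) + \tfrac{1}{\alpha}$, and a brief arithmetic check shows this is strictly less than $\alpha = 1.333$.

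I next take a length-minimizing essential loop $\gamma \subset \RP^2$, which may be assumed simple and transverse to the seam, so that $\gamma \cap H$ decomposes into finitely many embedded arcs with endpoints on $\partial H$.  The key lemma is an arc-replacement inequality: for each arc $\eta \subset H$ with endpoints $p, q \in \partial H$, the shorter of the two $\partial H$-arcs from $p$ to $q$, call it $\eta'$, satisfies $|\eta'| \leq |\eta|$.  This uses the fact that the equator of a round hemisphere is itself a geodesic of the hemisphere and realizes the intrinsic hemisphere distance between any two boundary points.  Carrying out every such replacement yields a loop $\gamma' \subset \Mob$ of length $|\gamma'| \leq |\gamma| < \sqrt{\alpha}$.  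To see that $\gamma'$ is essential in $\Mob$, I note that each union $\eta \cup \eta'$ bounds a disk in $H$, so $\gamma$ and $\gamma'$ are freely homotopic in $\RP^2$ and $\gamma'$ represents the non-trivial element of $\pi_1(\RP^2) = \Z/2$; since the map $\pi_1(\Mob) \to \pi_1(\RP^2)$ is the reduction $\Z \to \Z/2$, the class of $\gamma'$ in $\pi_1(\Mob) = \Z$ is odd and therefore non-zero.

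The main obstacle is the arc-replacement inequality, and the reason to use a round hemisphere rather than, say, a flat Euclidean disk of the same circumference, is precisely that only on a round hemisphere is the seam $\partial H$ a geodesic of the cap.  On a flat disk, a chord through the interior can be strictly shorter than either boundary arc, and the replacement would lengthen the loop.  The price paid for the hemispheric cap is the additional area $\ell^2/(2\pi) = 2/(\pi\alpha)$ injected into Pu's bound, and the scheme closes only because the numerical budget $\tfrac{\pi}{2}(.324) + \tfrac{1}{\alpha} < \alpha$ leaves a small but genuine margin.
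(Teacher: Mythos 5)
Your proof is correct, but it takes a genuinely different route from the paper's. The paper keeps the Mobius strip as it is and bounds its area from \emph{below}: first by the coarea formula (a trapeze with bases $\beta$ and $|\delta_0|$ and altitude $h$), and, when that falls short at $\approx .29$, by invoking the Blatter--Sakai comparison \cite{Bl, Bl2, Sak} with a belt around the equator of a round sphere, which yields $|\Mob| \geq \frac{2\alpha\sin\gamma}{\pi} \approx .32453 > .324$ --- an extremely tight margin. You instead transplant to the Mobius side the hemisphere-capping device that the paper uses only on the torus side (Lemma~\ref{163}), with Pu's inequality in place of Loewner's; your arc-replacement step (pushing the Pu loop off the round cap, using that the equator of a round hemisphere realizes the intrinsic distance between boundary points and that each replacement is a homotopy inside the disk) is sound, and is in fact a step the paper elides even in its own Lemma~\ref{163}. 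Your route is more self-contained and gives a far larger margin: $\frac{\pi}{2}(.324) + \frac{1}{\alpha} \approx 1.259$ against $\alpha = 1.333$, versus the paper's $.32453$ against $.324$; indeed your argument tolerates any Mobius area up to $\frac{2}{\pi}\left(\alpha - \alpha^{-1}\right) \approx .371$, so combined with Lemma~\ref{163} it would allow one to rebalance the two area thresholds and improve the constant of Theorem~\ref{15b} below $1.333$. The one point worth making explicit is that the loop you produce, being an odd power of the one-sided core of $\Mob$, is essential not only in $\Mob$ but in $3\RP^2$ (its mod~$2$ homology class equals that of the core, which is nonzero because the core is one-sided); the paper's construction relies on the same fact implicitly.
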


\begin{proof}
Let~$h$ be the least distance from a point of~$\delta_0$ to the
equator.  By the coarea formula applied to the distance function from
the equator, we obtain
\begin{equation}
\label{142} 
|\Mob| \geq 2 h |\delta_0| .
\end{equation}
Hence
\begin{equation*}
h \leq \frac{|\Mob|}{2|\delta_0|} \approx .18704 .
\end{equation*}
Connecting~$\delta_0$ to a nearest point of the equator by a pair of
paths of length~$h$, we obtain an essential loop on~$3\RP^2$ of length
at most
\begin{equation*}
|\delta_0| + 2h = \beta^{-1} + 2\frac{|\Mob|}{2|\delta_0|} =
\beta^{-1} + \beta |\Mob| \approx 1.26 ,
\end{equation*}
falling {\em short\/} (or, rather, {\em long}) of the required upper
bound of~$\beta \approx 1.15456$.  We will improve the estimate
\eqref{142} as follows.  If
\[
h \leq \frac{1}{2} ( \beta - |\delta_0|) \approx \frac{1}{2} (1.15456
- .86613) \approx .14422 ,
\]
then we obtain a short loop and prove the theorem.  Thus we may assume
that~$h\geq .14422$.

A level curve at distance~$x$ from the equator must have length at
least~$\beta -2x$ to avoid the creation of a short loop.
Hence~$|\Mob|$ is bounded below by {\em twice\/} the area of a trapeze
of altitude~$h$, larger base~$\beta$, and smaller base~$\delta_0$:
\[
|\Mob| \geq h (\beta + \delta_0) = .14422 \; ( 1.15456 + .86613)
 \approx .29 ,
\]
which falls short of the required estimate~$.324$.

Blatter \cite{Bl, Bl2} and Sakai
\cite{Sak} provide a lower bound equal to the half the area of a belt
formed by an~$h$-neighborhood of the equator of a suitable sphere of
constant curvature.  Here the equator has length~$2\beta$ while the
its antipodal quotient is a Mobius strip of systole~$\beta$.  The
radius of such a sphere is~$r = \frac{2\beta}{2\pi}=
\frac{\beta}{\pi}$.  Hence the subtending angle~$\gamma$ of the
northern half of the belt satisfies
\[
\gamma = \frac{h}{r} = \frac{h\pi}{\beta} \approx .39241 .
\]
(here we use the values~$\beta \approx 1.15456$ and~$|\delta_0| =
\beta^{-1} \approx .86613$).  The height function is the moment map
(Archimedes's theorem), and hence the area of the belt is proportional
to
\[
\sin \gamma \approx .38242 .
\]
The area of the corresponding region on the unit sphere is~$4\pi \sin
\gamma$.  Hence the area of the spherical belt is
\[
4 \pi r^2 \sin \gamma = \frac{4 \pi \beta^2 \sin \gamma}{\pi^2},
\]
which after quotienting by the antipodal map yields a lower bound
\[
|\Mob| \geq \frac{2 \beta^2 \sin \gamma}{\pi} = \frac{2 \alpha \sin
  \gamma}{\pi} = \frac{2 (1.333) \sin \gamma}{\pi} \approx .32453 >
  .324 ,
\]
proving the lemma.
\end{proof}

\begin{lemma}
\label{163}
If the torus with a disk removed has area at most~$.676$, then it
contains an essential loop of square-length less than~$\alpha= 1.333$.
\end{lemma}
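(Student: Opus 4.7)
The plan is to close $\Sigma_{1,1}$ into a closed Riemannian torus by capping off its boundary $\delta_0$ with a round hemisphere, and then apply Loewner's torus inequality to the resulting surface. The hemisphere is chosen so that its equator length matches $|\delta_0|=\beta^{-1}$; its radius is then $r=|\delta_0|/(2\pi)$ and its area is $|\delta_0|^2/(2\pi)=1/(2\pi\alpha)$. The capped torus has total area at most $0.676+1/(2\pi\alpha)$, so Loewner's inequality produces an essential loop $\gamma^*$ on it satisfying
\begin{equation*}
|\gamma^*|^2 \leq \tfrac{2}{\sqrt 3}\!\left(0.676+\tfrac{1}{2\pi\alpha}\right)\approx 0.918,
\end{equation*}
comfortably below $\alpha=1.333$.

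Next I would push $\gamma^*$ off the attached hemisphere. Since the shortest arc on a closed round hemisphere between two points of its equator runs along the equator itself, every sub-arc of $\gamma^*$ inside the hemisphere can be replaced by an arc of $\delta_0$ of no greater length. Because the hemisphere is simply connected, the resulting loop $\gamma'\subset\Sigma_{1,1}$ is freely homotopic to $\gamma^*$ in the capped torus, and so represents the same nonzero class in $\pi_1(\T^2)=\Z^2$.

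Finally I would verify that $\gamma'$ remains essential in $3\RP^2$. Nonzero image in $\Z^2$ forces $\gamma'$ to lie outside the commutator subgroup of $\pi_1(\Sigma_{1,1})=F_2$, hence $\gamma'$ is essential in $\Sigma_{1,1}$. Seifert--van Kampen applied to the decomposition $3\RP^2=\Sigma_{1,1}\cup_{S^1}\Mob$ presents $\pi_1(3\RP^2)$ as an amalgamated product $F_2\ast_\Z\Z$, with the amalgamating $\Z$ injecting into both factors (as the commutator $[a,b]$ in $F_2$, and as the square $c^2$ of the core circle of $\Mob$); the inclusion $F_2\hookrightarrow \pi_1(3\RP^2)$ is therefore injective, so $\gamma'$ survives as an essential loop in $3\RP^2$. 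The main point demanding care is this bookkeeping of essentiality through the two reductions $\T^2\leadsto\Sigma_{1,1}\leadsto 3\RP^2$; the geometric input (shortest arcs on the hemisphere) and the numerical verification are routine.
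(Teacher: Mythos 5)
Your overall strategy is the paper's: cap the boundary of $\Sigma_{1,1}$ with a round hemisphere, apply Loewner to the resulting closed torus, and transfer the Loewner loop back. But you have made a concrete error in setting up the cap. The boundary of $\Sigma_{1,1}$ is the separating loop of Proposition~\ref{141}, which \emph{double-covers} $\delta_0$ and therefore has length $2|\delta_0|$, not $|\delta_0|$. The hemisphere must be chosen with equator length $2|\delta_0|$, hence radius $r=\frac{2|\delta_0|}{2\pi}=\frac{|\delta_0|}{\pi}$ and area $2\pi r^2=\frac{2|\delta_0|^2}{\pi}=\frac{2}{\pi\alpha}\approx .478$, which is four times your value $\frac{1}{2\pi\alpha}\approx .119$. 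With the correct cap the Loewner bound reads
\begin{equation*}
\frac{2}{\sqrt 3}\left(.676+\frac{2}{\pi\alpha}\right)\approx 1.33204,
\end{equation*}
which beats $1.333$ only in the fourth decimal place. Since the entire point of the theorem is to shave the trailing ``$3$''s off Gromov's $1.3333\ldots$, your figure of $.918$ and the phrase ``comfortably below'' misrepresent how tight the estimate actually is; the lemma survives, but only just, and a referee would insist on the corrected arithmetic.

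The remainder of your argument is sound and in fact supplies details the paper leaves implicit: pushing $\gamma^*$ off the hemisphere by replacing interior sub-arcs with equatorial arcs (which works because the equator of a round hemisphere realizes intrinsic distances between its points, and the hemisphere is simply connected, so the homotopy class is preserved), and the Seifert--van Kampen verification that $\pi_1(\Sigma_{1,1})=F_2$ injects into $\pi_1(3\RP^2)=F_2\ast_{\Z}\Z$, so the loop remains essential downstream. That bookkeeping is a genuine improvement in rigor over the published proof; just fix the factor of two in the boundary length before relying on the numerics.
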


\begin{proof}
The separating loop is a circle of radius
\begin{equation*}
r = \frac{2|\delta_0|}{2\pi} = \frac{|\delta_0|}{\pi}.
\end{equation*}
The area of a hemisphere based on such a circle is
\begin{equation*}
2\pi r^2 = \frac{2 |\delta_0|^2}{\pi} = \frac{2}{\pi \alpha}.
\end{equation*}
Attaching the hemisphere to the torus with a disk removed produces a
torus of total area at most
\begin{equation*}
\frac{2}{ \pi \alpha} + .676.
\end{equation*}
Applying Loewner's bound \eqref{loew} to the resulting torus, we
obtain a systolic loop of square-length at most
\begin{equation*}
\frac{2}{\sqrt{3}} \left( \frac{2}{\pi \alpha} + .676 \right) \approx
1.33204 < 1.333,
\end{equation*}
proving the lemma and the theorem.
\end{proof}

\begin{remark}
Gromov points out at the bottom of page~49 in \cite{Gr1} that his
$3/4$ bound can be improved by 1\%; perhaps this estimate is what he
had in mind.
\end{remark}

\section{Acknowledgments}

We are grateful to C. Croke for helpful discussions.

\vfill\eject

\end{document}